\pdfoutput=1
\documentclass[10pt]{amsart}

\numberwithin{equation}{section}

\usepackage{amsmath,amsfonts,amssymb}
\usepackage{mathtools}
\usepackage[utf8]{inputenc}
\usepackage[english]{babel}
\usepackage{hyperref}

\DeclareUnicodeCharacter{FB01}{fi}
\newtheorem{theorem}{Theorem}[section]

\theoremstyle{definition}
\newtheorem{definition}{Definition}

\begin{document}
\title[HITCHIN FIBRATION FOR SYMPLECTIC AND ORTHOGONAL PARABOLIC BUNDLES]
{HITCHIN FIBRATION ON MODULI OF SYMPLECTIC AND ORTHOGONAL PARABOLIC HIGGS BUNDLES}

\author{Sumit Roy}
\address{School of Mathematics, Tata Institute of Fundamental Research, Homi Bhabha Road, Colaba, Mumbai 400005, India.}
\email{sumit@math.tifr.res.in}
\thanks{E-mail : sumit@math.tifr.res.in}
\thanks{Address : School of Mathematics, Tata Institute of Fundamental Research, Homi Bhabha Road, Colaba, Mumbai 400005, India.}
\subjclass[2010]{53D18, 37K10, 14D20, 14D22, 53D30, 14H60}
\keywords{Integrable system; Moduli space; Parabolic bundle.}

\begin{abstract}
	Let $X$ be a compact Riemann surface of genus $g \geq 2$, and let $D \subset X$ be a fixed finite subset. Let $\mathcal{M}(r,d,\alpha)$ denote the moduli space of stable parabolic $G$-bundles (where $G$ is a complex orthogonal or symplectic group) of rank $r$, degree $d$ and weight type $\alpha$ over $X$. Hitchin, in his paper \cite{Hi87} discovered that the cotangent bundle of the moduli space of stable bundles on an algebraic curve is an algebraically completely integrable system fibered, over a space of invariant polynomials, either by a Jacobian or a Prym variety of spectral curves. In this paper we study the Hitchin fibers for $\mathcal{M}(r,d,\alpha)$.
\end{abstract}
\maketitle
\section{Introduction}
Let $X$ be a compact Riemann surface, and let $D \subset X$ be a  fixed finite subset. The notion of parabolic bundles over a curve and their moduli space was described in \cite{MS80}. In \cite{BR89}, Bhosle and Ramanathan extended this notion to parabolic $G$-bundles where $G$ is a connected reductive group. The moduli space of parabolic Higgs bundles was constructed by Yokogawa \cite{Y93}. 

The notion of symplectic and orthogonal parabolic bundles were described in \cite{BMW11}. Those bundles are parabolic vector bundles with a nondegenerate (in a suitable sense) symmetric or anti-symmetric form taking values in a parabolic line bundle. When all parabolic weights are rational, the notion of parabolic symplectic or othogonal bundles coincides with the notion of parabolic principal $G$-bundles where $G$ is an orthogonal or symplectic group. \
 
 Hitchin in his paper \cite{Hi87} showed that the moduli space of stable $G$-Higgs bundles (where $G = \mathrm{GL}(m,\mathbb{C}), \mathrm{Sp}(2m,\mathbb{C})$, $\mathrm{SO}(2m,\mathbb{C})$ or  $\mathrm{SO}(2m+1,\mathbb{C} $)) on an algebraic curve forms an algebraically completely integrable system fibered, over a space of invariant polynomials, either by a Jacobian or a Prym variety of spectral curves. Later, in \cite{M94}, Markman proved the same for the moduli space of stable $L$-twisted Higgs bundles where $L$ is a positive line bundle on $X$ satisfying $L \geq K_X$. \ 
 
 In this paper, we will prove that the Hitchin fibers for the moduli space of stable parabolic symplectic or othogonal Higgs bundles on an algebraic curve are Prym varieties of the spectral curve with respect to an involution. In our context, Higgs fields are strongly parabolic, meaning that the Higgs field is nilpotent with respect to the flag. \
 
 Here is a brief outline of this paper:\\
 In section $2$ we give the necessary details regarding parabolic symplectic or othogonal Higgs bundles and their moduli. In section $3$ we give a description of the Hitchin fibration and the spectral data. In section $4$ we prove the main result for three different cases, i.e. for $G$ = $\mathrm{Sp}(2m,\mathbb{C})$,  $\mathrm{SO}(2m,\mathbb{C})$ and $\mathrm{SO}(2m+1,\mathbb{C})$. For $G$ = $\mathrm{Sp}(2m,\mathbb{C})$ or $\mathrm{SO}(2m+1,\mathbb{C})$, fibers are Prym variety of the spectral curve with respect to an involution wih fixed points. For $G$ = $\mathrm{SO}(2m,\mathbb{C})$, the spectral curve is singular but the fibers are Prym variety of the desingulrised spectral curve with respect to an involution without fixed points. The main results of section $4$ are proven in Theorem \ref{thm1}, Theorem \ref{thm2} and Theorem \ref{thm3}.

\section{Preliminaries}
\subsection{Parabolic Vector Bundles}Let $X$ be a compact Riemann surface of genus $g \geq 2$ with $n$ distinct marked points $p_1,..., p_n$. Let $D= p_1 + \cdots + p_n$ be the effective reduced divisor. A \textit{parabolic vector bundle} $E_*$ on $X$ is a holomorphic vector bundle $E$ of rank $r$ over $X$ together with a parabolic structure, i.e. for every point $p \in D$, we have
\begin{enumerate}
     \item a filtration of subspaces  $$E_p=: E_{p,1}\supsetneq \dots \supsetneq E_{p,r(p)} \supsetneq 0, $$
     \item a sequence of real numbers (parabolic weights)  satisfying $$0\leq \alpha_1(p) < \alpha_2(p) < \dots < \alpha_r(p) < 1.$$
\end{enumerate}
 The parabolic structure is said to have \textit{full flags} whenever dim$(E_{p,i}/E_{p,i+1}) = 1$ \hspace{1cm}$\forall i, \forall p\in D$. \\
 The \textit{parabolic degree} of a parabolic vector bundle $E_*$ is defined as
\[
\operatorname{par-deg}(E_*):= \deg(E)+ \sum\limits_{p\in D}\sum\limits_{i} \alpha_i(p) \cdot \dim(E_{p,i}/E_{p,i+1})
\]
and the real number par-deg$(E_*)$/rank$(E_*)$ is called the \textit{parabolic slope} of $E_*$ and it is denoted by $\mu_{par}(E_*)$.\\
The \textit{dual} of a parabolic bundle and \textit{tensor product} of two parabolic bundles can be defined in a natural way (see \cite{Y95}).\\
A \textit{parabolic homomorphism} $\phi : E_* \to E^\prime_*$ between two parabolic bundles is a homomorphism of vector bundles that satisfies
the following : at each $p \in D$ we have $\phi_p(E_{p,i}) \subset E_{p,i+1}^\prime$ whenever $\alpha_i(p) > \alpha_{i+1}^\prime(p)$. Furthermore, we call such map \textit{strongly parabolic} if $\alpha_i(p) \geq \alpha_{i+1}^\prime(p)$ implies $\phi_p(E_{p,i}) \subset E_{p,i+1}^\prime$ for every $p \in D$.

\subsection{Orthogonal and Symplectic Parabolic Bundles} Fix a parabolic line bundle $L_*$. Let $E_*$ be a parabolic vector bundle and let
\[
\varphi : E_* \otimes E_* \to L_*
\]
be a homomorphism of parabolic bundles. Tensoring both sides with the parabolic dual $E^*_*$ we get a homomorphism
\[
\varphi \otimes Id : E_* \otimes E_* \otimes E^*_* \to L_* \otimes E^*_*.
\]
The trivial line bundle $\mathcal{O}_X$ equipped with the trivial parabolic structure (meaning parabolic weights are all zero) is realized as a parabolic subbundle of $E_* \otimes E^*_*$. Let
\[
\tilde{\varphi} : E_* \to L_* \otimes E^*_*
\]
be the homomorphism defined by the composition
\[
E_* = E_* \otimes \mathcal{O}_X  \xhookrightarrow{} E_* \otimes (E_* \otimes E^*_*) = (E_* \otimes E_*) \otimes E^*_* \xrightarrow{\varphi \otimes Id} L_* \otimes E^*_*.
\]
\begin{definition} A \textit{parabolic symplectic bundle} is a pair $(E_*,\varphi)$ of the above form such that $\varphi$ is anti-symmetric and the homomorphism $\tilde{\varphi}$ is an isomorphism. \

A \textit{parabolic orthogonal bundle} is a pair $(E_*,\varphi)$ of the above form such that $\varphi$ is symmetric and the homomorphism $\tilde{\varphi}$ is an isomorphism.

\end{definition}
 
\subsection{Parabolic Higgs Bundles} Let $K$ be the canonical bundle on $X$. We write $K(D) \coloneqq K \otimes \mathcal{O}(D)$. A \textit{parabolic Higgs bundle} on $X$ is a parabolic bundle $E_*$ on $X$ together with a Higgs field $\Phi : E_* \to E_* \otimes K(D)$ such that $\Phi$ is strongly parabolic.\

Higgs field associated to a parabolic Higgs bundle is called \textit{parabolic Higgs field}.
 
\subsection{Orthogonal or Symplectic Parabolic Higgs Bundles} Let $(E_*,\varphi)$ be an orthogonal or symplectic parabolic bundle on $X$. A parabolic Higgs field on $E_*$ will induce a parabolic Higgs field on $L_* \otimes E^*_*$. A parabolic Higgs field $\Phi$ is said to be compatible with  $\varphi$ if $\tilde{\varphi}$ takes $\Phi$ to the induced parabolic Higgs field on $L_* \otimes E^*_*$.\
 
 An \textit{orthogonal} (resp. \textit{symplectic}) \textit{parabolic Higgs bundle} $(E_*,\varphi,\Phi)$ is an orthogonal (resp. symplectic) parabolic bundle $(E_*,\varphi)$  togther with a parabolic Higgs field $\Phi$ on $E_*$ which is compatible with $\varphi$.
 
\begin{definition} A holomorphic subbundle $F \subset E$ is called \textit{isotropic} if $\varphi(F \otimes F) = 0$.\\
 An orthogonal or symplectic parabolic Higgs bundle $(E_*,\varphi,\Phi)$ will be called \textit{stable} (resp. \textit{semistable}) if for every isotropic subbundle $F \subset E$ of positive rank, the following condition holds
\[
\mu_{par}(F_*) < \mu_{par}(E_*) \hspace{0.4cm}(\text{resp.} \hspace{0.15cm} \mu_{par}(F_*) \leq \mu_{par}(E_*)).
\]
\end{definition}\

When all parabolic weights are rational, the notion of orthogonal or symplectic parabolic bundles coincides with the notion of parabolic principal $G$-bundles where $G$ is an orthogonal or symplectic group, respectively (see [1], [2], [3] for more details).\
\subsection{Moduli Space of orthogonal or symplectic parabolic Higgs bundles} Fix a parabolic line bundle $M_*$ with trivial parabolic structure and let $E_*$ be a parabolic orthogonal or symplectic Higgs bundle of rank $r$, degree $d$ w.r.t. the line bundle $M_*$. So, we have an isomorphism $E_* \cong E_*^*  \otimes M_*$. Whenever $r$ is odd, $\deg M_* $ is even. When we deal with parabolic $\mathrm{SO}(2m+1)$-Higgs bundles, we assume that the parabolic determinant (top parabolic exterior power) is $M_*^{2m+1/2}$ after choosing a square root of $M_*$.

Moduli space of semistable parabolic $G$-bundles of rank $r$ and degree $d$ and fixed parabolic structure $\alpha$ was described in \cite{BBN01} and \cite{BR89}. It is a normal projective variety. When the parabolic structure $\alpha$ have full flags, the moduli space $\mathcal{M}(r,d,\alpha)$ of stable parabolic $G$-bundles is of dimension
\[
\dim Z(G) +  (g-1)\dim(G) + n\dim(G/B),
\]
where $Z(G)$ denotes the the center of $G$ and $n$ is the number of marked points. The last summand comes from the fact that the flags we are considering over each point of $D$ are full flags and $B$ is the Borel subgroup of $G$ determined by $\alpha$. From now on we assume that the parabolic weights are rational and the parabolic structures over all marked points have full flags. In case of orthogonal or symplectic groups,
\[
\dim\mathcal{M}(r,d,\alpha) = (g-1)\dim(G) + n\dim(G/B),
\]
where $G$ is an orthogonal or symplectic group respectively.\\
The moduli space $\mathcal{N}(r,d,\alpha)$ (see \cite{R16} for the construction) of stable orthogonal or symplectic parabolic Higgs bundles of rank $r$, degree $d$ and weight type $\alpha$ is a smooth irreducible complex variety. Serre duality of parabolic vector bundles (see \cite{Y95},\cite{BY96}) gives us an embedding of the cotangent bundle $T^*\mathcal{M}(r,d,\alpha) \xhookrightarrow{} \mathcal{N}(r,d,\alpha)$ as an open subset. The natural symplectic structure on the cotangent bundle extends to $\mathcal{N}(r,d,\alpha)$ (see Biswas–Ramanan \cite{BR94}, Konno \cite{K93} for details). Hence,
\[
\dim\mathcal{N}(r,d,\alpha) = 2\dim \mathcal{M}(r,d,\alpha) = 2(g-1)\dim(G) + 2n\dim(G/B).
\]

From now on, we shall  denote a parabolic symplectic or orthogonal Higgs bundle simply by $E$ instead of $E_*$.
\section{The hitchin system and spectral data}
\subsection{Hitchin Map}

We will now describe the Hitchin map and the Hitchin space for orthogonal or symplectic parabolic Higgs bundles.\

Let $\mathcal{S}$ be the total space of $K(D)$ and $p : \mathcal{S} \to X $ be the natural projection and $x \in H^0(\mathcal{S}; p^*K(D))$ be the tautological section. The characteristic polynomial of a Higgs field $\Phi$ gives
\[
 \det(x.\text{Id}  - p^*\Phi) = x^r + \tilde{s_1}x^{r-1} + \cdots +\tilde{s_r},
\]
where $\tilde{s_i} = p^*s_i$ for some $s_i \in H^0(X; K^i(D^i))$ and $K^i(D^j)$ denotes the tensor product of $i$-th power of $K$ and $j$-th power of $\mathcal{O}(D)$.\

Since $\Phi$ is strongly parabolic, its residue at each parabolic point is nilpotent and hence $s_i \in H^0(X; K^i(D^{i-1}))$. Therefore we have the \textit{Hitchin map}
\[
h : \mathcal{N}(r,d,\alpha) \longrightarrow \label{key}\mathcal{H} \coloneqq \oplus_{i=1}^r H^0(X; K^i(D^{i-1}))
\] 
which sends a parabolic symplectic or orthogonal Higgs bundle to the coefficients of the characteristic polynomial of its Higgs field and $\mathcal{H}$ is called the \textit{Hitchin base}. In fact, this is the definition for parabolic Higgs bundles. We will see later that in our case, the odd coefficients of the characteristic polynomial are all zero, i.e. for odd $i$, $s_i = 0$.\

This morphism doesn't depend on the parabolic stucture at each parabolic points, as it only depends on $\Phi$ and $K(D)$. This is a proper map (see \cite{M94} for details). Furthermore, we will see that $\mathcal{H}$ satisfies $\dim\mathcal{H} = \dim\mathcal{N}(r,d,\alpha)/2$, making the parabolic orthogonal or symplectic Higgs bundle moduli space into an integrable system. 
\subsection{Spectral Curves} Given $s=(s_1,....,s_r)\in \mathcal{H}$ with $s_i \in H^0(X; K^i(D^{i-1})) \subset  H^0(X; K(D)^i)$, we think of $s_i$ as a section of $K(D)^i$ and then we can define a \textit{spectral curve} $X_s$ in $\mathcal{S}$ as follows : 
consider the tautological section $x \in H^0(\mathcal{S}; p^*K(D))$ of $p^*K(D)$. Then $X_s$ is the zero locus of
\[
x^r + s_1x^{r-1} + \cdots + s_r.
\]

When $X_s$ is reduced, the projection $\pi = \left.p\right|_{X_s} : X_s \to X$ is a ramified covering of degree $r$. For a generic $s\in \mathcal{H}$, $X_s$ is smooth (see \cite{BNR89}). The genus of $X_s$ can be calculated using the adjunction formula :

\begin{align*}
\begin{split}
\label{eqn}
2g(X_s) - 2 &= \deg(K_{X_s})\\
            &= K_{X_s}.X_s\\
            &=K_S.X_s + X_s^2\\
            &=rc_1(\mathcal{O}(-D)) + K(D)^2\\
            &=-rn + r^2(2g-2+n).
\end{split}
\end{align*}
So,
\begin{equation}\label{genus}
g(X_s) = \dfrac{-rn + r^2(2g-2+n) + 2}{2}.
\end{equation}
\subsection{Pullback Bundle} Let us assume that $X_s$ is smooth and pull back the Higgs bundle $(E,\Phi)$ to $X_s$ by $\pi$. We canonically get a line bundle $L$ which satisfies the following exact sequence
\[
0 \to L(-R) \to \pi^*E \xrightarrow{\pi^*\Phi - \otimes x} \pi^*(E \otimes K(D)) \to L \otimes \pi^*K(D) \to 0
\]
(see \cite[Proposition 3.6]{BNR89}) where $R$ is the ramification divisor of the covering $\pi : X_s \to X$. Moreover we  can recover the Higgs field on $\pi_*L$ as follows : 
multiplication by the tautological section $x$ of $\pi^*K(D)$ gives us a morphism
\[
\Phi : \pi_*L \to \pi_*(L \otimes \pi^*K(D)) = \pi_*L \otimes K(D).
\] 
In order to obtain the degree of $L$, apply Grothendieck-Riemann-Roch theorem to the morphism $\pi : X_s \to X$, we get 
\[
\deg(\pi_*L) + r(1 - g) = \deg(L) + (1 - g(X_s))
\]
and hence
\begin{align}
\begin{split}
\label{degree}
\deg(L) &= \deg(\pi_*L) + r(1 - g) + (g(X_s) - 1).
\end{split}
\end{align}
Also, following \cite{Hi87} we have an exact sequence of sheaves 
\begin{equation}\label{sheaves}
0 \to \mathcal{O}(E^*) \to \mathcal{O}(\pi_*L)^* \to \mathcal{S} \to 0
\end{equation}
where $\mathcal{S}$ is a sheaf supported at the branch points of $\pi : X_s \to X$.\\The ramification divisor on $X_s$ is the divisor of a section of $K_{X_s} \otimes \pi^*K^*$ which has degree $(2g(X_s) - 2) - r(2g - 2)$. Thus from $(3.3)$, since $\mathcal{S}$ is supported on the branch points,
\begin{align*}
\deg(E^*) &= \deg(\pi_*L)^* - \deg(\mathcal{S})\\
          &= r(g - 1) - (g(X_s) - 1) - \deg(L). \hspace{1cm}  (\text{by}\hspace{0.1cm} \ref{degree})
\end{align*}
Hence
\begin{equation}
\deg(L) = r(g - 1) + (1 - g(X_s)) - \deg(E^*).
\end{equation}
\section{Proof of the theorems}
Now we will describe the fibers of the Hitchin map for orthogonal or symplectic parabolic Higgs bundles. We will do this for three different cases, i.e. when $G$ = $\mathrm{Sp}(2m,\mathbb{C})$,  $\mathrm{SO}(2m,\mathbb{C})$ and $\mathrm{SO}(2m+1,\mathbb{C})$.
\subsection{CASE I ($G$ = $\mathrm{Sp}(2m,\mathbb{C})$)}\label{caseI} A point of $\mathcal{N}(2m,d,\alpha)$ now consists of a stable symplectic parabolic Higgs bundle of rank $2m$ with a fixed parabolic line bundle $M$. We assume that $M$ has trivial parabolic structure. A point of $\mathcal{N}(2m,d,\alpha)$ (or simply $\mathcal{N}_{\mathrm{Sp}}(2m)$) can be viewed as a stable parabolic bundle $E$ of rank 2m with a nondegenerate symplectic form $< , >$, together with a holomorphic section $\Phi \in H^0(X; \operatorname{End}(E) \otimes K(D))$ which satisfies
\[
<\Phi v,w> = - <v,\Phi w>.
\]

Suppose $A\in \mathfrak{sp} (2 m,\mathbb{C})$ has distinct eigenvalues $\lambda_i$'s and $v_i$, $v_j$ are eigenvectors of $A$ with eigenvalues $\lambda_i$ and $\lambda_j$. Then
\[
\lambda_i<v_i,v_j> = <Av_i,v_j> = -<v_i,Av_j> = -\lambda_j<v_i,v_j>.
\]
So, $<v_i,v_j> = 0$ unless $\lambda_i = - \lambda_j$. Hence it follows from the nondegeneracy of symplectic form that if $\lambda_i$ is an eigenvalue then $-\lambda_i$ is also an eigenvalue. Thus the characteristic polynomial looks like
\[
\det(\lambda - A) = \lambda^{2m} + s_2\lambda^{2m-2} + \cdots + s_{2m},
\]
where the polynomials $s_2,...,s_{2m}$ form a basis for the invariant polynomials on $\mathfrak{sp}(2m,\mathbb{C})$. So, the Hitchin map is 
\[
h : \mathcal{N}_{\mathrm{Sp}}(2m) \longrightarrow \mathcal{H} = \oplus_{i=1}^m H^0(X; K^{2i}(D^{2i-1})).
\]
Since $\mathcal{H}$ is generated by $s_2,...,s_{2m}$, $\dim\mathcal{H} = \sum_{i=1}^{m} h^0(X; K^{2i}(D^{2i-1}))$. Now by Riemann-Roch and the parabolic Serre duality, $h^0(X; K^{2i}(D^{2i-1})) = 2i(2g-2) + (2i-1)n + 1 - g$. So, $\dim\mathcal{H} = m(m+1)(2g-2+n) + m(1-g-n) = m(2m+1)(g-1) + m^2n$.\\
In this case, the genus of the spectral curve $X_s$ (follows from (\ref{genus})) for a generic point $s \in \mathcal{H}$ is
\[
g(X_s) = -mn + 2m^2(2g-2+n) + 1 .
\]
Also, the spectral curve $X_s$ is given by the equation
\[
x^{2m} + s_2 x^{2m-2} + \cdots + s_{2m} = 0
\]
and possesses the involution $\sigma(\eta) = -\eta $ (since all odd coefficients of the equation are zero). Thus one can define a $2$-fold cover
\[
q : X_s \to X_s/\sigma.
\]
The involution acts on the line bundles of degree zero over $X_s$. The \textit{Prym variety} Prym$(X_s, X_s/\sigma)$ is given by the line bundles $L \in \text{Jac}(X_s)$ such that $\sigma^*L \cong L^*$. The dimension of the Prym variety is $g(X_s) - g(X_s/\sigma)$. \\
Note that the fixed points of the involution are the intersection points of zeroes of $s_{2m} \in H^0(X; K(D)^{2m})$ and the zero section $x = 0$. So $\sigma$ has $2m(2g - 2 + n)$ fixed points since they are the zeroes of a section of $K(D)^{2m}$ on $X$.
By Riemann-Hurwitz formula, we have 
\[
2g(X_s) - 2 = 2(2g(X_s/\sigma) - 2) + 2m(2g - 2 + n).
\]
So,
\begin{align}
\begin{split}
\label{eqn}
\dim \text{Prym}(X_s, X_s/\sigma) &= g(X_s) - g(X_s/\sigma)\\
                      &= g(X_s) - \dfrac{g(X_s)}{2} - \dfrac{1}{2} + \dfrac{m}{2}(2g - 2 + n)\\
                      &= \dfrac{1}{2}g(X_s) - \dfrac{1}{2} + \dfrac{m}{2}(2g - 2 + n)\\
                      &= -\dfrac{mn}{2} + m^2(2g - 2 + n) + \dfrac{m}{2}(2g - 2 + n)\\
                      &= m(2m + 1)(g - 1) + m^2n\\
                      &= \dim \text{Sp}(2m,\mathbb{C})(g - 1) + n\dim (\text{Sp}(2m,\mathbb{C})/B) \\
                      &= \dim \mathcal{M}(2m,d,\alpha)\hspace{0.2cm} (\text{or}\hspace{0.1cm} \dim \mathcal{M}_{\mathrm{Sp}}(2m))                     
\end{split}
\end{align}
since the dimension of a Borel subgroup $B$ of $\text{Sp}(2m,\mathbb{C})$ is $m^2 + m$. So, $\dim\mathcal{H} = \dim\mathcal{M}_{\mathrm{Sp}}(2m) = \dim\mathcal{N}_{\mathrm{Sp}}(2m)/2 = \dim \text{Prym}(X_s, X_s/\sigma).$
\begin{theorem}\label{thm1}
\textit{If $X_s$ is smooth, the generic fibers $h^{-1}(s)$ of the Hitchin map for parabolic $\mathrm{Sp}(2m,\mathbb{C})$-Higgs bundles is given by Prym varieties Prym$(X_s, X_s/\sigma)$}.
\end{theorem}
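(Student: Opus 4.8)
The plan is to upgrade the Beauville--Narasimhan--Ramanan spectral correspondence, already set up in Section 3, by the extra datum of the symplectic form, and thereby cut out the Prym inside the Jacobian of $X_s$. Throughout I assume $X_s$ smooth, so that $\pi : X_s \to X$ is finite and flat and the correspondence is an equivalence. \textbf{Step 1: the spectral correspondence.} First I would recall that, after forgetting $\varphi$, the fiber over $s$ of the Hitchin map for ordinary parabolic Higgs bundles is identified with an open subset of $\mathrm{Jac}^{\,\delta}(X_s)$, where $\delta = \deg L$ is the value computed in (\ref{degree}): the eigenline construction of the pullback sequence sends $(E,\Phi)$ to a line bundle $L$ on $X_s$, and conversely $E \cong \pi_* L$ with the Higgs field recovered as multiplication by the tautological section $x$. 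It therefore remains to characterize which $L$ correspond to \emph{symplectic} Higgs bundles.

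\textbf{Step 2: encoding the form.} Next I would translate the isomorphism $\tilde\varphi : E \xrightarrow{\ \sim\ } E^* \otimes M$ into a relation on $L$. Grothendieck--Serre duality for the finite map $\pi$ gives $(\pi_* L)^* \cong \pi_*\!\big(L^{-1} \otimes \omega_{X_s/X}\big)$ with $\omega_{X_s/X} \cong \mathcal{O}_{X_s}(R)$, hence $E^* \otimes M \cong \pi_*\!\big(L^{-1} \otimes \mathcal{O}(R) \otimes \pi^* M\big)$, and the natural spectral Higgs field on the right is again multiplication by $x$, realizing $(E^*\otimes M, \Phi^{t})$. The essential input is the eigenvalue computation recorded above, now read globally: compatibility of $\Phi$ with $\varphi$ means that under $\tilde\varphi$ the field $\Phi$ corresponds to $-\Phi^{t}$, so $\tilde\varphi$ identifies $(\pi_* L,\, x)$ with $\big(\pi_*(L^{-1}\otimes\mathcal{O}(R)\otimes\pi^*M),\, -x\big)$. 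Since $\sigma^* x = -x$, this last pair is isomorphic to $\big(\pi_*\sigma^*(L^{-1}\otimes\mathcal{O}(R)\otimes\pi^*M),\, x\big)$, and the equivalence of Step 1 then forces, using $\sigma^*\mathcal{O}(R)\cong\mathcal{O}(R)$ and $\pi\sigma=\pi$, the relation
\[
\sigma^* L \;\cong\; L^{-1} \otimes \mathcal{O}(R) \otimes \pi^* M .
\]
A degree comparison of the two sides against (\ref{degree}), via Riemann--Hurwitz, is the consistency check to record here.

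\textbf{Step 3: reduction to the Prym.} Finally, fixing a reference solution $L_0$ of the displayed relation — one exists because the fiber is nonempty — and writing any other solution as $L = L_0 \otimes N$, the relation collapses to $\sigma^* N \cong N^{-1}$, that is $\sigma^* N \cong N^*$ with $N \in \mathrm{Jac}(X_s)$. This is exactly the defining condition of $\mathrm{Prym}(X_s, X_s/\sigma)$, so $h^{-1}(s)$ is a torsor over, and hence isomorphic to, the Prym variety; the dimension count preceding the theorem, giving $\dim\mathrm{Prym}(X_s,X_s/\sigma)=\dim\mathcal{M}_{\mathrm{Sp}}(2m)$, confirms that the whole fiber is accounted for.

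\textbf{Main obstacle.} I expect Step 2 to be the crux: one must promote the pointwise symplectic pairing between the $\eta$- and $\sigma(\eta)$-eigenlines to a genuine isomorphism of line bundles over all of $X_s$, controlling simultaneously the twist $\mathcal{O}(R)$ from the relative dualizing sheaf, the pullback $\pi^* M$ of the parabolic form bundle, and the behaviour at the fixed points of $\sigma$ (the zeroes of $s_{2m}$ on the zero section). Checking that the pairing stays nondegenerate there, and that the strongly parabolic structure is respected so that the recovered $(\pi_* L, x)$ is again a stable symplectic parabolic Higgs bundle, is where the real work lies.
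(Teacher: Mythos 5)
Your overall strategy is the same as the paper's: use the spectral correspondence to trade $(E,\Phi,\varphi)$ for a line bundle $L$ on $X_s$, show that the symplectic structure translates into a relation of the form $\sigma^*L \cong L^{-1}\otimes(\text{fixed twist})$, and then identify the solution set with $\mathrm{Prym}(X_s,X_s/\sigma)$. Where you differ is in how the key relation is obtained and normalized. The paper argues directly: the pairing between the $\eta$- and $(-\eta)$-eigenlines gives a section of $L^*\otimes\sigma^*L^*\otimes\pi^*M$ vanishing exactly on the ramification locus, and a degree count then yields $L^*\otimes\sigma^*L^*\otimes\pi^*M\cong K_{X_s}\otimes\pi^*K^*$, i.e.\ equation (\ref{keyeqn}); it then twists by an explicit square root $(K_{X_s}\otimes\pi^*K^*\otimes\pi^*M^*)^{1/2}$ to land literally in the Prym. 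You instead invoke relative Grothendieck--Serre duality for $\pi$ together with functoriality of the BNR equivalence, and then pass to the Prym by a torsor argument with an arbitrary base point $L_0$. Your relation $\sigma^*L\cong L^{-1}\otimes\mathcal{O}(R)\otimes\pi^*M$ agrees with (\ref{keyeqn}) once you account for the quotient-versus-subbundle normalization of $L$ (they differ by $\mathcal{O}(R)$), and the degree check you propose does come out consistent with (\ref{degree}). Your route is cleaner and more categorical; the paper's buys the nondegeneracy of the section on the nose, which it then reuses in the converse.

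The one substantive omission is the converse direction, which you correctly flag as "the real work" but do not carry out, and which occupies half of the paper's proof. Knowing that $L$ satisfies $\sigma^*L\cong L^{-1}\otimes\mathcal{O}(R)\otimes\pi^*M$ gives an abstract isomorphism $\pi_*L\cong(\pi_*L)^*\otimes M$ compatible with the Higgs fields, but the theorem requires an actual \emph{anti-symmetric} nondegenerate pairing, a parabolic structure, and stability. The paper supplies these explicitly: the form is $\langle v,w\rangle=\operatorname{tr}_{X_s/X}\bigl(\sigma^*(v)w/d\pi\bigr)$, nondegenerate by (\ref{keyeqn}) and skew because $(\sigma^*)^2=-1$ on $L$ (this sign is precisely what separates the symplectic from the orthogonal case, so it cannot be waved through); the full flags at $p\in D$ come from the local description $E|_p=\mathbb{C}[x]/(x^{2m})$ with $\Phi$ acting by multiplication by $x$; and stability follows from the absence of $\Phi$-invariant subbundles when $X_s$ is smooth. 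Your torsor argument also silently assumes the fiber is nonempty and that the abstract relation on $N$ is exactly the Prym condition with no leftover two-torsion ambiguity; the paper's explicit square root sidesteps the first issue. As written, your proposal establishes the forward inclusion convincingly but leaves the reverse inclusion as an identified, not a closed, gap.
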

\begin{proof} Let $E$ be a parabolic symplectic Higgs bundle. So,
\begin{align*}
\operatorname{par-deg}(E) &= \operatorname{par-deg}(E^* \otimes M) \\
                          &= \operatorname{par-deg}(E^*) + 2m\deg(M)\\
                          &= - \operatorname{par-deg}(E) + 2m\deg(M).
\end{align*}
i.e. 
\[
\operatorname{par-deg}(E) = m\deg(M).
\]

 Since $X_s$ is smooth, we obtain an eigenspace bundle $L \subset \ker(\eta - \Phi) \subset \pi^*E$  corresponding to the eigenvalue $\eta$. Then $\sigma^*L$ is the eigenspace bundle corresponding to $- \eta$.
 So the symplectic form on $E$ defines a section of $L^* \otimes \sigma^*L^* \otimes \pi^*M$ which is non-vanishing if the eigenvalues are distinct, i.e. away from the ramification locus of $X_s$. Thus from (\ref{sheaves}), we have
\begin{align*}
 \operatorname{par-deg}(E^*) &= \deg(\pi_*L)^* - \deg(\mathcal{S})\\
                             &= -\dfrac{1}{2}\deg(K_{X_s} \otimes \pi^*K^*) -\deg(L).
\end{align*}
So,
\[
\deg(L) = -\dfrac{1}{2}\deg(K_{X_s} \otimes \pi^*K^*) + \dfrac{1}{2}\deg(\pi^*M).
\]
Thus
\[
\deg(L^* \otimes \sigma^*L^* \otimes \pi^*M) = \deg(K_{X_s} \otimes \pi^*K^*).
\]
Also, $L^* \otimes \sigma^*L^* \otimes \pi^*M$ has a section with zeros on the ramification locus of $X_s$, a divisor of $K_{X_s} \otimes \pi^*K^*$. So, 
\[
L^* \otimes \sigma^*L^* \otimes \pi^*M  \cong K_{X_s} \otimes \pi^*K^*.
\]
So,
\begin{equation}\label{keyeqn}
\sigma^*L \cong L^* \otimes (K_{X_s} \otimes \pi^*K^*)^{-1} \otimes \pi^*M.
\end{equation}
 Then choosing a holomorphic square root $R = (K_{X_s}\otimes \pi^*K^* \otimes \pi^*M^*)^{1/2}$ and setting $U = L \otimes R$, we obtain a point $U \in \text{Prym}(X_s, X_s/\sigma)$, i.e. $\sigma^*U \cong U^*$.\
 
Conversely, suppose $U \in \text{Prym}(X_s, X_s/\sigma)$. Then consider the line bundle 
\[
L = U \otimes (K_{X_s} \otimes \pi^*K^* \otimes \pi^*M^*)^{-1/2}
\]
and the parabolic structure on $E = \pi_*L$ is defined as follows : the Higgs field $\Phi$ is given by multiplication by the tautological section $x$.\

For each $p \in D$, there is an open subset $A$ of $X$ as in \cite[Proposition 2.2]{LM10} such that (as an $\mathcal{O}_A $-module)
\[
\left.E\right|_A = \mathcal{O}_A[x]/(x^{2m} + s_2x^{2m-2} + \cdots + s_{2m}).
\]
Since $s_i$ vanishes at $p$ for all $i$, we have 
\[\left.E\right|_p = \mathbb{C}[x]/(x^{2m}).
\]
That defines a full flag on $\left.E\right|_p$ as $\Phi$ is given by multiplication by $x$ which must coincide with the parabolic structure.\
Since the smoothness of the spectral curve $X_s$ guarantees that there are no $\Phi$- preserved subbundle of $L$, so $E$ is stable.\

Now the symplectic structure on $E$ is defined as follows. The involution $\sigma$ produces a bilinear form on $E = \pi_*L$ away from the ramification locus of $X_s$. For two sections $v,w \in H^0(\pi^{-1}(V); L)$, the form 
\begin{equation}\label{eqn}
<v,w> = \operatorname{tr}_{X_s/X}(\frac{\sigma^*(v)w}{d\pi}) \in H^0(\pi^{-1}(V); \pi^*M)
\end{equation}
is non-degenerate by (\ref{keyeqn}),  where $d\pi$ is the canonical section given by the derivative of $\pi : X_s \to X$.
This pairing is skew because of the fact that $(\sigma^*)^2 = - 1$ on $L$, see \cite[5.10]{Hi87} for details.\\
Hence the generic fibers of the corresponding Hitchin fibration is identified with $\text{Prym}(X_s, X_s/\sigma)$.
\end{proof}
\subsection{CASE II ($G= \mathrm{SO}(2m,\mathbb{C})$)} We shall now consider the moduli space of stable parabolic orthogonal Higgs bundles with a fixed parabolic line bundle $M$, where $M$ has trivial parabolic structure. A point of $\mathcal{N}_{\mathrm{SO}}(2m)$ is now  a rank $2m$ stable vector bundle $E$ with a non-degenerate symmetric bilinear  form $< , >$, and $\Phi \in H^0(X; \operatorname{End}(E) \otimes K(D))$ which satisfies $<\Phi v,w> =  -<v,\Phi w>$. \\
As in the previous case, one can see that for a matrix $A \in \mathfrak{so}(2m, \mathbb{C})$ with distinct eigenvalues, if $\lambda_{i}$ is an eigenvalue then so is $- \lambda_{i}$. So the characteristic polynomial is of the form 
\[
\det(\lambda - A) = \lambda^{2m} + s_2\lambda^{2m-2} + \cdots + s_{2m}.
\]
In this case, the coefficient $s_{2m}$ is a square of a polynomial $p_m$, the Pfaffian, of degree $m$. A basis for the invariant polynomials on the Lie algebra $\mathfrak{so}(2m,\mathbb{C})$ is given by the coefficients $s_2,...,s_{2m-2},p_m$. In this case, the Hitchin map is given by
\[
h : \mathcal{N}_{\mathrm{SO}}(2m) \longrightarrow \mathcal{H} = \oplus_{i=1}^m H^0(X; K^{2i}(D^{2i-1}))
\]
and $\mathcal{H}$ is generated by $s_2,...,s_{2m-2},p_m$ where $s_{2i} \in H^0(X; K(D)^{2i})$ and $p_m \in H^0(X; K(D)^m)$. In this case, $\dim\mathcal{H} = \sum_{i=1}^{m-1} h^0(X; K^{2i}(D^{2i-1})) + h^0(X; K(D)^m)$. Again, using the Riemann-Roch theorem, $\dim\mathcal{H} = m(2m-1)(g-1) + mn(m-1)$.
Here the spectral curve $X_s$ given by the equation
\[
 \det(x - \Phi) = x^{2m} + s_2x^{2m-2} + \cdots +  s_{2m-2}x^2 + p_m^2.
\] 
This curve has singularities where $x = 0$ and $p_m = 0$. Hence, by Bertini's theorem the generic divisor has these as only singularities as these curves are the base points of this system. Since $p_m$ is a section of $K(D)^m$, there are $\deg K(D)^m = m(2g - 2 + n)$ singularities. The \textit{virtual genus} of $X_s$ for a generic $s \in \mathcal{H}$ is given by $g(X_s) = -mn + 2m^2(2g - 2 + n) + 1$. So the genus of the non-singular model $\hat{X_s}$ is thus 
\begin{align*}
g(\hat{X_s}) &= g(X_s) - \text{number of singularities}\\
                 &= -mn + 2m^2(2g - 2 + n) + 1 - m(2g - 2 + n) \\
                 &= 2m(2m - 1)(g - 1) + 2mn(m - 1) + 1
\end{align*}
Fixed points of the involution $\sigma(\eta) = - \eta$ on $X_s$ are the singularities of $X_s$ and so extends to an involution $\hat{\sigma}$ on $\hat{X_s}$ without fixed points. So by Riemann-Hurewitz, 
\[
2g(\hat{X_s}) - 2 = 2(2g(\hat{X_s}/\hat{\sigma}) - 2)
\]
and thus 
\begin{align*}
\dim\text{Prym}(\hat{X_s},\hat{X_s}/\hat{\sigma}) &= g(\hat{X_s}) - g(\hat{X_s}/\hat{\sigma})\\
                                  &= \dfrac{1}{2}g(\hat{X_s}) - \dfrac{1}{2}\\
                                  &= m(2m - 1)(g - 1) + mn(m - 1) \\
                                  &= \dim \mathrm{SO}(2m, \mathbb{C}) + n\dim(\mathrm{SO}(2m,\mathbb{C})/B) \\
                                  &= \dim\mathcal{M}_{\mathrm{SO}}(2m)
\end{align*}
since the dimension of a Borel subgroup $B$ of $\mathrm{SO}(2m,\mathbb{C})$ is $m^2$ and hence $\dim\mathcal{H} = \dim\mathcal{M}_{\mathrm{SO}}(2m) = \dim\mathcal{N}_{\mathrm{SO}}(2m)/2 = \dim\text{Prym}(\hat{X_s},\hat{X_s}/\hat{\sigma}).$
\begin{theorem}\label{thm2}
\textit{The smooth fibers $h^{-1}(s)$ of the Hitchin fibration for parabolic $\mathrm{SO}(2m,\mathbb{C})$-Higgs bundles are given by Prym$(\hat{X_s},\hat{\sigma})$, where $\hat{X_s}$ is the desingularisation of the curve $X_s$. }
\end{theorem}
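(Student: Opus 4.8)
The plan is to mirror the strategy of Theorem~\ref{thm1}, but to carry it out on the desingularised spectral curve $\hat{X_s}$ rather than on the singular curve $X_s$. The essential new feature of the $\mathrm{SO}(2m,\mathbb{C})$ case is that $s_{2m}=p_m^2$ is a perfect square, so the spectral equation factors as $x^{2m}+s_2x^{2m-2}+\cdots+p_m^2=0$ and the involution $\sigma(\eta)=-\eta$ has its fixed points exactly at the $m(2g-2+n)$ points where $x=0$ and $p_m=0$. These are precisely the singular points of $X_s$; after normalisation the lifted involution $\hat{\sigma}$ on $\hat{X_s}$ is \emph{free}, which is why the dimension count above uses the Riemann–Hurwitz formula with no ramification term. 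So the first step is to set up the eigenline bundle $L$ on $\hat{X_s}$ (pulling back the construction of Section~3.3 through the normalisation map $\nu:\hat{X_s}\to X_s$), and to record the analogue of the degree formula so that the final correspondence lands in $\text{Prym}(\hat{X_s},\hat{X_s}/\hat{\sigma})$.

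\medskip

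\noindent
\textbf{Forward direction.} Given a stable parabolic orthogonal Higgs bundle $(E,\varphi,\Phi)$ with $E\cong E^*\otimes M$, I would first repeat the parabolic-degree computation to get $\operatorname{par-deg}(E)=m\deg(M)$. On $\hat{X_s}$ the eigenline bundle $L\subset \nu^*\pi^*E$ for the eigenvalue $\eta$ has the property that $\hat{\sigma}^*L$ is the eigenline for $-\eta$, so the symmetric form $\varphi$ yields a section of $L^*\otimes\hat{\sigma}^*L^*\otimes\hat{\pi}^*M$ (where $\hat{\pi}=\pi\circ\nu$). Exactly as before this section vanishes precisely on the ramification divisor, giving $L^*\otimes\hat{\sigma}^*L^*\otimes\hat{\pi}^*M\cong K_{\hat{X_s}}\otimes\hat{\pi}^*K^*$, hence an isomorphism of the form~(\ref{keyeqn}) on $\hat{X_s}$. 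Twisting by a square root $\hat{R}=(K_{\hat{X_s}}\otimes\hat{\pi}^*K^*\otimes\hat{\pi}^*M^*)^{1/2}$ and setting $U=L\otimes\hat{R}$ produces $U$ with $\hat{\sigma}^*U\cong U^*$, i.e.\ a point of the Prym variety. The one genuinely new point to check is that the pairing $\langle v,w\rangle=\operatorname{tr}_{\hat{X_s}/X}\!\bigl(\hat{\sigma}^*(v)\,w/d\hat{\pi}\bigr)$ is now \emph{symmetric} rather than skew: since $\hat{\sigma}$ is fixed-point-free we have $(\hat{\sigma}^*)^2=+1$ on $L$, which is exactly the sign change that turns the symplectic form of Case~I into an orthogonal form here.

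\medskip

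\noindent
\textbf{Converse direction.} Starting from $U\in\text{Prym}(\hat{X_s},\hat{X_s}/\hat{\sigma})$, I would set $L=U\otimes\hat{R}^{-1}$, push forward to define $E=\hat{\pi}_*L$ with Higgs field $\Phi$ given by multiplication by the tautological section $x$, and recover the full flag at each $p\in D$ from the local description $E|_p\cong\mathbb{C}[x]/(x^{2m})$ as in the symplectic case. Stability again follows from the irreducibility of (the normalisation of) the spectral curve, since there is no $\Phi$-invariant subbundle. The orthogonal form is reconstructed from $\hat{\sigma}$ via the same trace pairing, now symmetric because $(\hat{\sigma}^*)^2=+1$.

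\medskip

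\noindent
\textbf{The main obstacle} I anticipate is precisely the passage through the normalisation: unlike Case~I, the relevant sheaf $\pi_*L$ lives over the \emph{singular} curve $X_s$, and one must verify that pushing forward the eigenline bundle from $\hat{X_s}$ still yields a locally free sheaf $E$ of the correct rank $2m$ on $X$, with the symmetric form remaining non-degenerate across the images of the fixed nodes. Concretely, the exact sequence~(\ref{sheaves}) and the degree bookkeeping must be re-derived with $g(X_s)$ replaced by the arithmetic genus and with the $m(2g-2+n)$ nodes accounted for, so that $\deg L$ comes out correctly and the Prym dimension matches $\dim\mathcal{M}_{\mathrm{SO}}(2m)$ as computed above. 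Controlling the behaviour of $L$ and of the pairing at the nodes—equivalently, checking that the fixed-point-free involution $\hat{\sigma}$ descends the data compatibly—is the step that requires care; everything else is a sign-tracking variant of Theorem~\ref{thm1}.
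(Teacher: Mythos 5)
Your proposal follows essentially the same route as the paper's proof: pass to the eigenline bundle $L$ on the desingularisation $\hat{X_s}$, invoke the isomorphism (\ref{keyeqn}) and twist by the square root of $K_{\hat{X_s}}\otimes\pi^*K^*\otimes\pi^*M^*$ to land in $\text{Prym}(\hat{X_s},\hat{X_s}/\hat{\sigma})$, and reconstruct the orthogonal structure via the trace pairing, which is symmetric because $(\hat{\sigma}^*)^2$ is the identity. The normalisation and pushforward bookkeeping that you flag as the main obstacle is a legitimate concern, but the paper's (much terser) proof does not address it either, so your account is, if anything, more careful on that point.
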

\begin{proof} Let $E$ be a parabolic $\mathrm{SO}(2m,\mathbb{C})$-Higgs bundle. Since $\hat{X_s}$ is smooth, one can obtain an eigenspace bundle $L \subset \ker(\eta - \Phi) \subset \pi^*E$ corresponding to an eigenvalue $\eta$. From  (\ref{keyeqn}), we have
\[
\hat{\sigma}^*L \cong L^* \otimes (K_{\hat{X_s}}\otimes \pi^*K^*)^{-1} \otimes \pi^*M
\]
and setting $U = L \otimes (K_{\hat{X_s}}\otimes \pi^*K^* \otimes \pi^*M^*)^{1/2}$,  we obtain a point of Prym$(\hat{X_s},\hat{X_s}/\hat{\sigma})$.\\
Conversely, as before, a line bundle $U \in \text{Prym}(\hat{X_s},\hat{X_s}/\hat{\sigma})$ induces a parabolic Higgs bundle $E$, which is the direct image of $L = U \otimes (K_{\hat{X_s}}\otimes \pi^*K^* \otimes \pi^*M^*)^{-1/2}$. The non-degenerate pairing is given by
\[
<v,w> = \operatorname{tr}_{\hat{X_s}/X}(\frac{\sigma^*(v)w}{d\pi})
\]
as in (\ref{eqn}). This pairing is symmetric since $(\hat{\sigma}^*)^2$ is identity.
\end{proof}
\subsection{CASE III ($G = \mathrm{SO}(2m + 1,\mathbb{C})$)} Lastly, we consider a stable parabolic bundle $E$ of rank $(2m + 1)$ with a non-degenerate symmetric bilinear form $< , >$ with values in a fixed parabolic line bundle $M$ with trivial parabolic structure, and a Higgs field $\Phi \in H^0(X; \operatorname{End}(E) \otimes K(D))$ which satisfies 
\[
<\Phi v,w> = - < v,\Phi w>.
\] 
The characteristic polynomial of $\Phi$ is of the form (assuming $\Phi$ has distinct eigenvalues)
\[
\det(\lambda - \Phi) = \lambda(\lambda^{2m} + s_2\lambda^{2m - 2} + \cdots + s_{2m}).
\]
So, the vector bundle homomorphism $\Phi : E \to E \otimes K(D)$ always has an eigenvalue zero, so we will have a line bundle $E_0 \subset \ker\Phi$.\\
Therefore the bundle $E$ on $X$ is an extension
\begin{equation}\label{extn}
0 \to E_0 \to E \to E_1 \to 0.
\end{equation}
Using the symmetric bilinear form on $E$, the skew  map $\Phi$ will induce a section $\phi \in H^0(X; \wedge^{2}E^{*} \otimes M \otimes K(D))$. So,
\[
\phi^m \in H^0(X; \wedge^{2m}E^* \otimes M^m \otimes K(D)^m) \cong H^0(X; E \otimes M^{-1/2} \otimes K(D)^m).
\]
Where the isomorphism comes from the fact that $E \cong E^* \otimes M$ and $\det E \cong M^{(2m+1)/2}$. So, we get an injective morphism from $M^{1/2} \otimes K(D)^{-m}$ to $E$ whose image is the line bundle $E_0$. Thus, $E_0 \cong M^{1/2} \otimes K(D)^{-m}$ and hence from (\ref{extn}), we have 
\begin{equation}
\wedge^{2m}E_1 \cong K(D)^m \otimes M^m.
\end{equation}

The inner product $< , >$ on $E$ induces a non-degenerate skew form on $E_1$ with values in $K(D) \otimes M$, by
\[
 (v,w) := <\Phi v, w>.
\]
Since $(\Phi v,w) + (v,\Phi w) = <\Phi^2 v,w> + <\Phi v, \Phi w> = 0$, the form is well defined on $E_1$ but is singular when $s_{2m}=0$. If we consider $V = E_1 \otimes K(D)^{-1/2}$, then $V$ is a parabolic $\mathrm{Sp}(2m, \mathbb{C})$-Higgs bundle as in the CASE I (\ref{caseI}). Let us denote the induced homomorphism on $E_1$ by $\Phi$ for simplicity. The characteristic polynomial $\det(x - \Phi)$ of $\Phi$ defines a component of the spectral curve, which we shall denote by $\pi : X_s \to X$, and it is given by the equation
\[
x^{2m} + s_2x^{2m - 2} + \cdots + s_{2m} = 0,
\]
where $s_{2i} \in H^0(X; K(D)^{2i})$. This is a $2m$-fold cover of $X$ with genus $g(X_s)
 = -mn + 2m^2(2g - 2 + n) + 1$. As in the case of $\mathrm{Sp}(2m,\mathbb{C})$, the curve $X_s$ has an involution $\sigma(\eta)= - \eta$. Also,
 \begin{align*}
 \dim\text{Prym}(X_s,\sigma) &= m(2m + 1)(g - 1) + m^2n\\
                             &= \dim\mathrm{SO}(2m + 1,\mathbb{C})(g - 1) + n\dim(\mathrm{SO}(2m + 1,\mathbb{C})/B)\\
                             &= \dim\mathcal{M}_{\mathrm{SO}}(2m + 1)
 \end{align*}
 since the dimension of a Borel subgroup $B$ of $\mathrm{SO}(2m + 1,\mathbb{C})$ is $m^2 + m$.
 \begin{theorem}\label{thm3}
 \textit{The smooth fibers $h^{-1}(s)$ of the Hitchin fibration for a parabolic $\mathrm{SO}(2m + 1,\mathbb{C})$-Higgs bundles are given by Prym$(X_s,\sigma)$.}
 \end{theorem}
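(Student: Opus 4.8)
The plan is to reduce this case to the symplectic situation of Theorem~\ref{thm1}. The key structural facts have already been extracted above: every parabolic $\mathrm{SO}(2m+1,\mathbb{C})$-Higgs bundle $E$ contains a rigid zero-eigenvalue line subbundle $E_0 \cong M^{1/2}\otimes K(D)^{-m} \subset \ker\Phi$ fitting into the extension (\ref{extn}), and the quotient $E_1$ carries the induced non-degenerate skew form $(v,w)=\langle\Phi v,w\rangle$ with values in $K(D)\otimes M$. First I would pass to $V = E_1\otimes K(D)^{-1/2}$, which is a genuine parabolic $\mathrm{Sp}(2m,\mathbb{C})$-Higgs bundle in the sense of CASE~I (\ref{caseI}): its characteristic polynomial is $x^{2m}+s_2x^{2m-2}+\cdots+s_{2m}$, its spectral curve is exactly the degree-$2m$ curve $X_s$, and it carries the involution $\sigma(\eta)=-\eta$ with fixed points over $\{x=0,\ s_{2m}=0\}$. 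The content of the theorem should then be that the moduli carried by $E$ over a fixed $s\in\mathcal{H}$ is precisely the moduli of $V$, since the extra line $E_0$ is pinned down by the spectral invariants.

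The fiber identification in one direction is immediate from Theorem~\ref{thm1} applied to $V$. Smoothness of $X_s$ yields the eigenspace line bundle $L\subset\ker(\eta-\Phi)\subset\pi^*V$, and the symplectic form on $V$ produces, exactly as in the derivation of (\ref{keyeqn}), the relation $\sigma^*L\cong L^*\otimes(K_{X_s}\otimes\pi^*K^*)^{-1}\otimes\pi^*M$; setting $U=L\otimes(K_{X_s}\otimes\pi^*K^*\otimes\pi^*M^*)^{1/2}$ gives a point $U\in\text{Prym}(X_s,\sigma)$. Conversely, a line bundle $U\in\text{Prym}(X_s,\sigma)$ reconstructs $V$, and hence $E_1=V\otimes K(D)^{1/2}$ together with its $K(D)\otimes M$-valued skew form, via the trace pairing (\ref{eqn}) exactly as in CASE~I. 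Thus the symplectic rank-$2m$ part of the problem is controlled entirely by $\text{Prym}(X_s,\sigma)$, whose dimension already matches $\dim\mathcal{M}_{\mathrm{SO}}(2m+1)$ by the computation preceding the statement.

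The main obstacle, and the step I expect to require genuine care, is reassembling the full orthogonal bundle $E$ from the pair $(E_0,V)$ and showing that this introduces no further moduli. One is tempted to take $E=E_0\oplus E_1$ as an orthogonal direct sum, but this is \emph{not} correct globally: the restriction of $\langle\,,\rangle$ to $E_0$ is a section of $E_0^{-2}\otimes M\cong K(D)^{2m}$, whose vanishing divisor is $\{s_{2m}=0\}$ (equivalently, it is essentially $s_{2m}$ itself), so the form degenerates on $E_0$ precisely at the $2m(2g-2+n)$ fixed points of $\sigma$, where the zero eigenline collides with a nonzero eigenvalue. Hence (\ref{extn}) splits orthogonally only away from this divisor, and the global recovery of $E$ is governed by a Hecke-type modification gluing $E_0$ into $E_1$ along $\{s_{2m}=0\}$. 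The crux is therefore to show that this gluing, and the resulting non-degenerate $M$-valued symmetric form on $E$ for which $\Phi$ is skew, are uniquely determined by $s$ and $U$, so that the assignments $E\mapsto U$ and $U\mapsto E$ are mutually inverse over the fixed $s$. Here I would invoke the smoothness of $X_s$ (which forbids $\Phi$-invariant isotropic subbundles and forces stability as in CASE~I), properness of the Hitchin map, and the matching dimension count to conclude that the correspondence is a bijection, whence $h^{-1}(s)\cong\text{Prym}(X_s,\sigma)$; the parabolic full flags on $E|_p$, determined by $\Phi$ acting as multiplication by $x$, extend across the extra summand $E_0$ just as in the previous cases.
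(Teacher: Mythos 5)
Your reduction to the symplectic case is exactly the paper's: you isolate the zero-eigenline $E_0 \cong M^{1/2}\otimes K(D)^{-m}$ inside $\ker\Phi$, pass to the quotient $E_1$ with its $K(D)\otimes M$-valued skew form $(v,w)=\langle\Phi v,w\rangle$, set $V=E_1\otimes K(D)^{-1/2}$, and run the argument of Theorem \ref{thm1} to land in $\mathrm{Prym}(X_s,\sigma)$; the forward direction and the recovery of $V$ from $U\in\mathrm{Prym}(X_s,\sigma)$ are fine. You also correctly diagnose the real difficulty in the converse direction, namely that $E$ is not the orthogonal direct sum $E_0\oplus E_1$ because the restriction of the form to $E_0$ vanishes along $\{s_{2m}=0\}$, so $E$ must be reassembled from $(E_0,E_1)$ by a nontrivial extension supported there.

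At that crux, however, you stop short, and this is a genuine gap. Asserting that the gluing ``is uniquely determined by $s$ and $U$'' and then invoking properness of $h$ together with the matching dimension count does not close the argument: equality of dimensions does not produce the inverse map $U\mapsto E$, and it does not show that $E\mapsto U$ is injective --- injectivity is precisely the uniqueness of the reconstruction you are leaving unproved, so the dimension count cannot substitute for it. The paper, following \cite[5.17]{Hi87}, supplies the missing mechanism explicitly: the composition $E_0\to E\to E_0^*\otimes M$ is the section $s_{2m}$ of $K(D)^{2m}$, so $E_0\subset E_1^*$ over $Z=\{s_{2m}=0\}$, giving a class in $H^0(Z;\operatorname{Hom}(E_0,E_1^*\otimes M))$; the extension $0\to E_1^*\otimes M\to E\to E_0^*\otimes M\to 0$ is then defined by the image of this class under the coboundary map of the sheaf sequence $0\to\operatorname{Hom}(E_0^*\otimes M,E_1^*\otimes M)\xrightarrow{s_{2m}}\operatorname{Hom}(E_0,E_1^*\otimes M)\to\operatorname{Hom}_Z(E_0,E_1^*\otimes M)\to 0$, and the nondegenerate $M$-valued symmetric form on $E$ is built from $\Phi\omega^{2m-1}\in H^0(X;E_1\otimes E_1\otimes M^{-1})$, where $\omega$ is the skew form on $E_1$. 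Your proposal needs this coboundary construction (or an equivalent explicit one) to become a proof; as written, the key step is named but not performed.
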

 \begin{proof} Let $E$ be a stable parabolic $\mathrm{SO}(2m + 1,\mathbb{C})$-Higgs bundle over $X$. From the previous discussion we will get a stable parabolic $\mathrm{Sp}(2m, \mathbb{C})$-Higgs bundle $V$ over $X$. Following the symplectic case (\ref{caseI}), we will get an element in the Prym variety of the spectral curve.\\ 	
Conversely, as in (\ref{caseI}), we can recapture the parabolic $\mathrm{Sp}(2m,\mathbb{C})$-Higgs bundle $V$ from a line bundle $L \in \text{Prym}(X_s,\sigma)$. Which will induce a rank $2m$ bundle $E_1$ equipped with a skew form $( , )$ with values in $K(D) \otimes M$ and a homomorphism $\Phi : E_1 \to E_1 \otimes K(D)$ satisfying $(\Phi v, w) = - (v, \Phi w)$. We need to reconstruct the parabolic $\mathrm{SO}(2m + 1, \mathbb{C})$-Higgs bundle $E$ from $E_1$.\\
Now we will follow the same technique as in \cite[5.17]{Hi87} in our case. For that, let's look at how $E_1$ arose and dualize the sequence ($\ref{extn}$) to get
\[
0 \to E_1^* \to E^* \to E_0^* \to 0.
\]
or,
\begin{equation}
0 \to E_1^* \otimes M \to E \to E_0^* \otimes M \to 0.
\end{equation}
Also, the composition $E_0 \to E \to E_0^* \otimes M$ defines a section of $E_0^* \otimes E_0^* \otimes M \cong K(D)^{2m}$ which is the coefficient $s_{2m}$. Since $E_0$ is zero on the zero set $Z$ of $s_{2m}$, we have an inclusion $E_0 \subset E_1^*$.\

Consider the exact sequence of sheaves
\[
0 \to \operatorname{Hom}(E_0^*\otimes M, E_1^* \otimes M) \xrightarrow{s_{2m}} \operatorname{Hom}(E_0, E_1^* \otimes M) \to \operatorname{Hom}_{Z}(E_0, E_1^* \otimes M) \to 0
\]
and then look at the cohomology sequence. The extension is defined by the coboundary map
\[
\delta : H^0(Z; \operatorname{Hom}(E_0, E_1^* \otimes M)) \to H^1(X; \operatorname{Hom}(E_0^* \otimes M, E_1^* \otimes M)).
\]
Now following \cite{Hi87}, we can construct an extension $E$
\begin{equation}
0 \to E_1^* \otimes M \to E \to \wedge^{2m}E_1 \otimes M \to 0
\end{equation}
by the coboundary map.\

We will use the above applied to the bundle $E_1$ arising from a point of the Prym variety of $X_s$. Since $E_1$ has a skew form $\omega \in H^0(X; E_1^* \otimes E_1^* \otimes K(D) \otimes M)$ and the homomorphism $\Phi \in H^0(X; \operatorname{End}E_1 \otimes K(D))$, considering 
\begin{align*}
\omega^{2m-1} &\in H^0(X; \wedge^{2m-1}E_1^* \otimes \wedge^{2m-1}E_1^* \otimes K(D)^{2m-1} \otimes M^{2m-1})\\
              &\cong H^0(X; E_1 \otimes E_1 \otimes (\wedge^{2m}E_1^*)^2 \otimes K(D)^{2m-1} \otimes M^{2m-1})\\
              &\cong H^0(X; E_1 \otimes E_1 \otimes K(D)^{-1} \otimes M^{-1})
\end{align*}
and then applying $\Phi$ to $\omega^{2m-1}$ to obtain a symmetric bilinear form 
\[
\Phi\omega^{2m-1} \in H^0(X; E_1 \otimes E_1 \otimes M^{-1}).
\]
This will induce a symmetric nondegenerate bilinear form on $E$ with values in $M$.

 \end{proof}

 \section*{Acknowledgement}
I would like to thank my advisor Prof. Indranil Biswas for suggesting the problem, and for helpful discussions. I would also like to thank Mr. Sujoy Chakraborty for useful discussions.

\end{document}